\newtheorem{theorem}{Theorem}
\newtheorem{corollary}[theorem]{Corollary}
\newtheorem{lemma}{Lemma}
\theoremstyle{definition}
\newcommand{\beql}[1]{\begin{equation}\label{#1}}
\newcommand{\eeq}{\end{equation}}
\newcommand{\comment}[1]{}
\newcommand{\Abs}[1]{{\left|{#1}\right|}}
\newcommand{\Set}[1]{{\left\{{#1}\right\}}}
\newcommand{\RR}{{\mathbb R}}
\newcommand{\CC}{{\mathbb C}}
\newcommand{\ZZ}{{\mathbb Z}}
\newcommand{\one}{{\bf 1}}
\newcommand{\supp}{{\rm supp\,}}
\newcommand{\ft}[1]{\widehat{#1}}
\newcounter{rem}
\newcounter{step}
\newcounter{mysec}
\newcounter{mysubsec}[mysec]
\newcounter{othm}
\def\theothm{\Alph{othm}}
\begin{document}

\title[No measurable Steinhaus sets]{Measurable Steinhaus sets do not exist for finite sets or the integers in the plane}

\author{Mihail N. Kolountzakis and Michael Papadimitrakis}
\address{Department of Mathematics and Applied Mathematics, University of Crete, Voutes Campus, GR-700 13, Heraklion, Crete, Greece}
\email{kolount@gmail.com}
\email{papadim@math.uoc.gr}

\begin{abstract}
A Steinhaus set $S \subseteq \RR^d$ for a set $A \subseteq \RR^d$ is a set such that $S$ has exactly one point in common with
$\tau A$, for every rigid motion $\tau$ of $\RR^d$.
We show here that if $A$ is a finite set of at least two points then there is no such set $S$ which is Lebesgue measurable.

An old result of Komj\'ath says that there exists a Steinhaus set for $A = \ZZ\times\Set{0}$ in $\RR^2$.
We also show here that such a set cannot be Lebesgue measurable.
\end{abstract}

\maketitle

%%%%%%%%%%%%%%%%%%%%%%%%%%%%%%%%%%%
\section{Introduction}

A classical question  of Steinhaus \cite{sierpinski1958probleme,moser1981research} was if there exist two proper subsets $A, B \subseteq \RR^2$
such that any rigid motion of $A$ intersects $B$ at exactly one point.
This was answered in the affirmative by Sierpinski \cite{sierpinski1958probleme}.

Let us rephrase the question. If $\rho \in O(2)$ denotes an arbitrary orthogonal transformation and $x \in \RR^2$ then
a general rigid motion of $A$ is $x+\rho A$ and we are asking that
$$
\sum_{b \in B} \one_{x+\rho A}(b) = 1,\ \ \ \text{for all $\rho\in O(2)$ and all $x \in \RR^2$.}
$$
We can rewrite this (interpreting the infinite sum in the only possible way) as
\beql{general-tiling}
\sum_{b \in B} \one_{\rho A}(x-b) = 1,\ \ \ \text{for all $\rho \in O(2)$ and all $x \in \RR^2$}.
\eeq
In other words we are asking that for any $\rho \in O(2)$ the $B$-translates of $\rho A$ form a partition of $\RR^2$.
This is, in other words, a {\em tiling} condition: all sets $\rho A$ tile space when translated by the same set $B$,
a situation which we may denote by
$$
\rho A \oplus B = \RR^2.
$$

\noindent{\bf Measurablity in the Steinhaus question}.
Condition \eqref{general-tiling} may be altered by
\begin{itemize}
\item [(a)] demanding that the sets $A$ and $B$ are Lebesgue measurable (usually one
of them is countable and hence automatically measurable and of measure 0) and
\item[(b)] only asking that \eqref{general-tiling} holds for almost all $x \in \RR^2$.
\end{itemize}
We call this a {\em measurable} Steinhaus problem.

A further question of Steinhaus was if there is a set $A \subseteq \RR^2$ which intersects every rigid motion of $\ZZ^2$
at exactly one point. Let us call this a {\em lattice Steinhaus set}.
Thus set $B$ is given as $\ZZ^2$ and one asks for a set $A$ which satisfies \eqref{general-tiling}.

Komj\'ath \cite{komjath1992lattice} answered the Steinhaus question in the affirmative when $B=\ZZ\times\Set{0}$
showing that there are Steinhaus sets for this $B$.
Sierpinski \cite{sierpinski1958probleme} showed that a bounded set $A$ which is either closed or open cannot have the
lattice Steinhaus property (that is, intersect all rigid motions of $\ZZ^2$ at exactly one point).
Croft \cite{croft1982three} and Beck \cite{beck1989lattice} showed that no bounded and measurable set $A$ can
have the lattice Steinhaus property (but see also \cite{mallinikova1995}).
Kolountzakis \cite{kolountzakis1996problem,kolountzakis1996new} and Kolountzakis and Wolff \cite{kolountzakis1999steinhaus}
proved that any measurable set in the plane that has the measurable Steinhaus property must necessarily
have very slow decay at infinity (it is easy to see that any such set must have measure 1). In \cite{kolountzakis1999steinhaus}
it was also shown that there can be no measurable Steinhaus sets in dimension $d\ge 3$ (with the obvious definition)
a fact that was also shown later by Kolountzakis and Papadimitrakis \cite{kolountzakis2002steinhaus} by a very different method.
See also \cite{chan2007steinhaus,mauldin2003comments,ciucu1996remark,srivastava2005steinhaus}. Kolountzakis \cite{kolountzakis1997multi} looks at the case where
the linear transformation $\rho$ is only required to take on finitely many values.
In a major result Jackson and Mauldin \cite{jackson2002sets,jackson2002lattice} proved the existence of lattice Steinhaus sets in the plane
(not necessarily measurable). Their method does not extend to higher dimension $d \ge 3$. See also \cite{mauldin2001some,jackson2003survey}.

In this paper we deal with the measurable Steinhaus problem for the case of finite sets $B$.
So far there have been negative results without assuming measurability and only for small cardinalities of the set $B$.
Gao, Miller and Weiss \cite{gao2006steinhaus} prove that for a large class of 4-point sets $B$ in the plane there is
no corresponding Steinhaus set $A$.
Xuan \cite{xuan2012steinhaus} proves this for all 4-point sets $B$.

\noindent{\bf New results.}
Our first result is that in the measurable case this is true for all finite sets.
\begin{theorem}\label{th:main}
If $B \subseteq \RR^d$ is a finite set, $\Abs{B}>1$, then there is no measurable Steinhaus set for $B$.
That is, there is no Lebesgue measurable set $A \subseteq \RR^d$ such that for all $\rho \in O(d)$
$$
\sum_{b \in B} \one_{\rho A}(x-b) = 1,\ \ \ \text{for almost all $x \in \RR^d$}.
$$
\end{theorem}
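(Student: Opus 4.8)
The plan is to pass to the Fourier transform side, where the \emph{simultaneous} tiling encoded by the Steinhaus property becomes a rigid constraint on the spectrum of $\one_A$. Write $\mu_B=\sum_{b\in B}\delta_b$. The hypothesis says that for every $\rho\in O(d)$ the bounded function $\one_{\rho A}$ satisfies $\one_{\rho A}*\mu_B=1$ almost everywhere, hence as tempered distributions; on Fourier transforming, $\widehat{\one_{\rho A}}\,\widehat{\mu_B}=\delta_0$. Since $\widehat{\mu_B}(\xi)=\sum_{b\in B}e^{-2\pi i b\cdot\xi}$ is a bounded smooth function with $\widehat{\mu_B}(0)=\Abs{B}\ne0$, dividing by $\widehat{\mu_B}$ on the open set $\{\widehat{\mu_B}\ne 0\}$ shows that $\widehat{\one_{\rho A}}$ equals $\tfrac1{\Abs{B}}\delta_0$ there; equivalently
\[
\supp\bigl(\widehat{\one_{\rho A}}-\tfrac1{\Abs{B}}\delta_0\bigr)\subseteq Z:=\bigl\{\xi\in\RR^d:\widehat{\mu_B}(\xi)=0\bigr\}.
\]
Using $\widehat{\one_{\rho A}}(\xi)=\widehat{\one_A}(\rho^{-1}\xi)$ and the $O(d)$-invariance of $\delta_0$, this becomes $\supp(\widehat{\one_A}-\tfrac1{\Abs{B}}\delta_0)\subseteq\rho Z$ for every $\rho$, and hence
\[
\supp\bigl(\widehat{\one_A}-\tfrac1{\Abs{B}}\delta_0\bigr)\subseteq\bigcap_{\rho\in O(d)}\rho Z .
\]
Because $O(d)$ acts transitively on each sphere centred at the origin, the right-hand side is exactly the union of those origin-centred spheres (the degenerate one $\{0\}$ included) that lie entirely inside $Z$. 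Thus the whole theorem reduces to the claim: \emph{for $d\ge 2$, $\widehat{\mu_B}$ does not vanish identically on any sphere $\{\Abs{\xi}=r\}$, $r\ge0$.}

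The case $r=0$ is clear because $\widehat{\mu_B}(0)=\Abs{B}\ge2$. For $r>0$ I would argue by analytic continuation, and this is the single place where $d\ge2$ is genuinely used. Choose a unit vector $\omega'$ for which the linear functional $b\mapsto b\cdot\omega'$ attains a \emph{unique} maximum over the finite set $B$, at $b^{*}$ say — this is possible by taking $\omega'$ normal to a supporting hyperplane of $\mathrm{conv}(B)$ at one of its vertices — and pick a unit vector $\omega$ orthogonal to $\omega'$, which requires $d\ge2$. The great circle $\gamma(\theta)=r(\cos\theta\,\omega+\sin\theta\,\omega')$ lies on $\{\Abs{\xi}=r\}$ for every real $\theta$, and
\[
h(\theta):=\widehat{\mu_B}(\gamma(\theta))=\sum_{b\in B}e^{-2\pi i r[(b\cdot\omega)\cos\theta+(b\cdot\omega')\sin\theta]}
\]
extends to an entire function of $\theta\in\CC$. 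If $\widehat{\mu_B}$ vanished on the sphere then $h\equiv0$ on $\RR$, hence $h\equiv0$ on $\CC$ by the identity theorem. But evaluating at $\theta=it$, the modulus of the $b$-th summand of $h(it)$ is $e^{2\pi r(\sinh t)(b\cdot\omega')}$, so as $t\to+\infty$ the $b^{*}$-summand strictly dominates all the others and $h(it)\ne0$ for all large $t$ — contradicting $h\equiv0$. This proves the claim.

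With the claim in hand, $\bigcap_{\rho\in O(d)}\rho Z=\emptyset$, so $\widehat{\one_A}-\tfrac1{\Abs{B}}\delta_0$ has empty support; that is, $\widehat{\one_A}=\tfrac1{\Abs{B}}\delta_0$ and therefore $\one_A=\tfrac1{\Abs{B}}$ almost everywhere. Since $\Abs{B}\ge2$, this would make the indicator function $\one_A$ equal to the value $\tfrac1{\Abs{B}}\in(0,1)$ on a set of positive measure, which is absurd. Hence no Lebesgue measurable Steinhaus set for $B$ exists.

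I expect the crux of the argument to be the sphere claim of the second paragraph — namely, isolating and cleanly proving that a finite exponential sum cannot vanish identically on a sphere in dimension at least two (the great-circle restriction, the entirety of $h$, and the dominant-term estimate along the imaginary axis); everything else is routine distributional bookkeeping for tiling identities. Two remarks seem worth making: measurability of $A$ is used twice, to give meaning to $\widehat{\one_A}$ as a tempered distribution and to obtain the final contradiction; and the hypothesis $d\ge2$ cannot be dispensed with, since on the line no great circle is available and there do exist measurable Steinhaus sets for two-point subsets of $\RR$.
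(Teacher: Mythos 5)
Your proposal is correct and follows essentially the same route as the paper: a distributional support theorem confining $\supp\ft{\one_{\rho A}}$ (modulo a point mass at the origin) to the zero set of $\ft{\delta_B}$, rotation-invariance forcing $\ft{\delta_B}$ to vanish on a whole origin-centred sphere, and an analytic-continuation-plus-dominant-exponential argument showing no nonzero trigonometric polynomial can do that. The only differences are cosmetic: you complexify the angle along a great circle (handling all $d\ge 2$ at once) where the paper sets $z=x-iy$ in the plane and restricts to $2$-planes for $d>2$, and your endgame ($\one_A\equiv 1/\Abs{B}$ a.e.) replaces the paper's observation that $\supp\ft{\one_{\rho S}}$ cannot equal $\Set{0}$.
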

Since \cite{gao2006steinhaus,xuan2012steinhaus} do not deal with necessarily measurable sets $A$ their results
are not directly comparable to our result.

The method of proof follows a Fourier Analytic approach to tiling \cite{kolountzakis2004study}
accompanied by an interesting result about the zero set of multivariate trigonometric polynomials (Theorem \ref{th:circle}).
The proof is given in \S\ref{sec:proof}.

We also prove, without Fourier Analysis this time, that a set such as Komjath \cite{komjath1992lattice} constructed cannot
be Lebesgue measurable.

\begin{theorem}\label{th:z}
There is no Lebesgue measurable set $S \subseteq \RR^2$ such that for all $\rho \in O(2)$
$$
\sum_{n \in \ZZ\times\Set{0}} \one_S(x-\rho n) = 1,\ \ \ \text{for almost all $x \in \RR^2$}.
$$
\end{theorem}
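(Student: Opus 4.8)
The plan is to convert the ``exactly-one-point'' condition into a statement about one-dimensional slices of $S$, extract from it a packing property of $S$ at all integer distances together with a linear upper bound for $\Abs{S\cap D_R}$, and then play these two facts against each other.

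First I would fix, for each unit vector $u=\rho(1,0)$ with $\rho\in O(2)$, the foliation of $\RR^2$ by the lines $\ell$ parallel to $u$, and apply Fubini to the hypothesis $\sum_{n}\one_S(x-\rho n)=\sum_n\one_S(x-nu)=1$ (a.e.\ $x$). Since $x$ and $x-nu$ lie on the same leaf $\ell$, this gives: for a.e.\ line $\ell\parallel u$, the set $S\cap\ell$, read in the arc-length coordinate on $\ell$, is a measurable fundamental domain of $\RR/\ZZ$, i.e.\ $\sum_{n\in\ZZ}\one_{S\cap\ell}(t-n)=1$ for a.e.\ $t$. In particular $S\cap\ell$ has one-dimensional measure exactly $1$, and its integer translates along $\ell$ are pairwise a.e.\ disjoint.

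From this I would extract two consequences. (a) Integrating $\Abs{S\cap\ell}=1$ over the transverse parameter shows $\Abs{S\cap\Sigma}=w$ for every bi-infinite strip $\Sigma$ of width $w$ in any direction; covering a disk $D_R$ by $2R/w+O(1)$ parallel width-$w$ strips and letting $w\to0$ yields $\Abs{S\cap D_R}\le 2R$, so $S$ has at most linear growth and, in particular, Lebesgue density $0$. (b) Integrating the a.e.\ disjointness of the slice-translates over the transverse parameter — using that $nu\parallel\ell$, hence $(S\cap\ell)+nu=(S+nu)\cap\ell$ — gives $\Abs{S\cap(S-v)}=0$ for every $v\in\RR^2$ with $\Abs{v}\in\Set{1,2,3,\dots}$; equivalently, for a.e.\ $x\in S$ and every $n\in\NN$ the circle of radius $n$ centered at $x$ meets $S$ in arc-measure $0$.

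The heart of the matter is to derive a contradiction from (a) and (b) together with the fundamental-domain structure, and this is the step I expect to be the real obstacle: by themselves (a) and (b) are consistent — a zero-density set can well avoid all integer distances a.e. — so one must exploit the slice structure in several directions at once. The route I would try: choose a Lebesgue density point $x_0\in S$ and $r<\tfrac12$ with $\Abs{B(x_0,r)\setminus S}<\delta\Abs{B(x_0,r)}$; by (b) this forces $S$ to be $(1-O(\delta))$-absent from each thin annulus $\Set{x:n-r<\Abs{x-x_0}<n+r}$, $n\in\NN$, and hence (looking along a.e.\ line through $x_0$, which meets $S$ in a fundamental domain of $\RR/\ZZ$ containing $x_0$) to place its ``representative'' near $x_0$ on most short segments emanating from $x_0$; feeding this back into the fundamental-domain requirement for lines transverse to those through $x_0$ — e.g.\ for two nearly orthogonal pencils of directions — should over-determine $S$ in a neighbourhood of $x_0$ and produce the contradiction once $\delta$ is small enough. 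Making this last interaction quantitative and clean is where the work lies. (One can also close Theorem~\ref{th:z} by a soft Fourier argument — mollify $\one_S$ to a bounded continuous $g$ still satisfying $\sum_n g(x-nu)=1$, conclude that $\widehat g$ vanishes off the closed unit disk, and note that Poisson summation along $\ZZ u$ then makes the $m=0$ term constant only if $\widehat g$ carries mass on a line, which is impossible — but the content of Theorem~\ref{th:z} is precisely that this can be avoided.)
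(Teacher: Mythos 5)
Your reduction is sound as far as it goes: the Fubini argument showing that for a.e.\ line $\ell$ in any direction the slice $S\cap\ell$ is a measurable fundamental domain of $\RR/\ZZ$ (hence $\Abs{S\cap\ell}=1$) is exactly the content of the paper's Lemma~\ref{lm:lines}, and your consequences (a) and (b) are correct. But the proposal has a genuine gap precisely where you flag it: the contradiction is never derived. Your proposed route --- density point $x_0$, near-emptiness of $S$ in thin annuli at integer radii, then ``over-determination'' coming from transverse pencils of lines --- is only a heuristic; no quantitative statement is formulated that the two pencils would jointly violate, and since you yourself observe that (a) and (b) alone are mutually consistent, the entire proof rests on this unproved interaction. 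The parenthetical Fourier alternative also does not work as stated: such an $S$ necessarily has infinite measure, so $\ft{\one_S}$ is only a tempered distribution, and the claim that a mollification $g$ has $\ft{g}$ supported in the closed unit disk is not justified (this is exactly why the paper abandons the Fourier route for this theorem, in contrast to Theorem~\ref{th:main}).

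The paper closes the argument by an idea absent from your proposal: potential theory one dimension up. First it upgrades the slice statement to: for a.e.\ \emph{point} of $\RR^2$, a.e.\ line \emph{through that point} meets $S$ in measure $1$ (this is the second half of Lemma~\ref{lm:lines}; it needs the change of line-parametrization that you use implicitly when you ``look along a.e.\ line through $x_0$'', and requires a short argument that the reparametrizing map pulls null sets back to null sets). Then one defines the Newtonian potential $f(z)=\int_{\RR^2}\one_S(w)\Abs{z-w}^{-1}\,dw$ for $z\in\RR^3$. Writing the integral in polar coordinates around a point $z$ of the plane, the measure-one property of a.e.\ line through $z$ gives $f\equiv\pi$ on $\RR^2$; one then checks $f$ is continuous, bounded by $\pi$, and harmonic in the open upper half-space, hence equal to the Poisson integral of its boundary values, i.e.\ $f\equiv\pi$ throughout the upper half-space --- contradicting $f(x,y,t)\to 0$ as $t\to+\infty$. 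Replacing your final paragraph by this potential-theoretic step (Theorem~\ref{th:harmonic}) is the missing idea; your steps (a) and (b), while true, are not used by the paper and do not by themselves suffice.
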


A Fourier Analytic approach to Theorem \ref{th:z}, along the lines of the proof of Theorem \ref{th:main},
 is made difficult by the fact that such a set $S$, if it exists, must necessarily have infinite measure.

Theorem \ref{th:z} turns out to be a consequence of the following fact: there is no Lebesgue measurable set
in the plane which intersects almost every line at the same measure (Theorem \ref{th:harmonic} and Theorem \ref{th:harmonic-bounds}).
The proof appears in \S\ref{sec:z-proof}.

%%%%%%%%%%%%%%%%%%%%%%%%%%%%%%%%%%%%%%%%%%%%%%%%%%%%%%%%%%%%
\section{No measurable Steinhaus sets for finite sets}\label{sec:proof}

Our first result is very similar (see e.g.\ \cite{kolountzakis2004study}) to the
corresponding result which covers tiling Euclidean space by a set of finite measure.
The proof is not exactly the same, owing to the fact that our set $S$ must necessarily
have infinite measure.

\noindent{\bf Notation:} For any discrete set of points $B$ we denote by $\delta_B$ the measure which
assigns a unit point mass to each point of $B$.
 
\begin{theorem}\label{th:supp}
Suppose $S \subseteq \RR^d$ is a measurable set and $B \subseteq \RR^d$ is a finite set
such that $\one_S*\delta_B=1$ almost everywhere.
Then for the tempered distribution $\ft{\one_S}$ and the trigonometric polynomial
$$
\ft{\delta_B}(x) = \sum_{b \in B} e^{-2\pi i b\cdot x}
$$
we have
\beql{supp}
\supp{\ft{\one_S}} \subseteq \Set{0} \cup \Set{\ft{\delta_B}=0}.
\eeq
\end{theorem}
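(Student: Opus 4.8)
The plan is to translate the pointwise tiling identity $\one_S * \delta_B = 1$ into a statement about tempered distributions by taking Fourier transforms, and then to analyze the support of the resulting product. First I would observe that $\one_S$ is a locally integrable function which, by the hypothesis $\one_S*\delta_B=1$ a.e., satisfies $\one_S \le 1$ pointwise (up to a null set), so $\one_S \in L^\infty(\RR^d)$ and in particular defines a tempered distribution whose Fourier transform $\ft{\one_S}$ makes sense. The convolution $\one_S * \delta_B$ is well-defined as a tempered distribution because $\delta_B$ is a finite (compactly supported) measure, and the exchange formula gives $\FT{\one_S * \delta_B} = \ft{\one_S}\cdot\ft{\delta_B}$, where $\ft{\delta_B}(x) = \sum_{b\in B} e^{-2\pi i b\cdot x}$ is a trigonometric polynomial and hence a smooth function of polynomial growth, so the product with the tempered distribution $\ft{\one_S}$ is legitimate. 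On the other side, the Fourier transform of the constant function $1$ is $\delta_0$, the unit point mass at the origin. Thus the hypothesis becomes the distributional identity
\beql{dist-id}
\ft{\one_S}\cdot\ft{\delta_B} = \delta_0.
\eeq

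Next I would extract the support information from \eqref{dist-id}. The support of $\delta_0$ is $\Set{0}$. The trigonometric polynomial $\ft{\delta_B}$ is a smooth nonvanishing function on the open set $U = \RR^d \setminus \Set{\ft{\delta_B}=0}$, so on $U$ it has a smooth reciprocal $1/\ft{\delta_B}$ which is again of polynomial growth on compact subsets; multiplying \eqref{dist-id} by a smooth cutoff supported in $U$ and then by $1/\ft{\delta_B}$ shows that, restricted to $U$, $\ft{\one_S}$ equals the restriction of $\delta_0$ to $U$, i.e.\ equals $\delta_0$ if $0 \in U$ and $0$ otherwise. In either case, on the open set $U \setminus \Set{0}$ the distribution $\ft{\one_S}$ vanishes. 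Equivalently, $\supp{\ft{\one_S}}$ cannot meet $U \setminus \Set{0} = \RR^d \setminus \big(\Set{0}\cup\Set{\ft{\delta_B}=0}\big)$, which is precisely \eqref{supp}.

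The only genuinely delicate point is justifying that one may multiply the distribution $\ft{\one_S}$ by the smooth function $1/\ft{\delta_B}$ locally near any point $x_0$ with $\ft{\delta_B}(x_0)\ne 0$; this is routine once one works locally with a compactly supported smooth bump $\varphi$ equal to $1$ near $x_0$ and supported where $\ft{\delta_B}\ne 0$, since then $\varphi/\ft{\delta_B}$ is a legitimate Schwartz-class (or at least smooth compactly supported) multiplier and $\langle \ft{\one_S}, \psi\rangle = \langle \ft{\one_S}\cdot\ft{\delta_B}, (\varphi/\ft{\delta_B})\psi\rangle = \langle \delta_0, (\varphi/\ft{\delta_B})\psi\rangle$ for every test function $\psi$ supported near $x_0$. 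I expect no real obstacle here beyond bookkeeping: the substance of the statement is just the elementary fact that where the multiplier is invertible the support of the product determines the support of the factor. \Qed
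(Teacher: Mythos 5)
Your proposal is correct and follows essentially the same route as the paper: both arguments reduce to the distributional identity $\ft{\one_S}\cdot\ft{\delta_B}=\delta_0$ and then divide a test function supported off $\Set{0}\cup\Set{\ft{\delta_B}=0}$ by the nonvanishing smooth multiplier $\ft{\delta_B}$. The only cosmetic difference is that the paper does not cite the exchange formula as a black box but verifies the needed instance of it by an explicit computation with the test function $\phi=\psi/\ft{\delta_B}$, plugging in $\one_S*\delta_B=1$ directly under the integral sign.
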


\begin{proof}
Let $K = \Set{0}\cup \Set{\ft{\delta_B}=0}$, which is a closed set. Inclusion \eqref{supp}
means (by the definition of the support of a tempered distribution) that
$\ft{\one_S}(\psi) = 0$ for all smooth
$\psi$ supported in $K^c$.
For such a $\psi$ we have
\beql{conv}
\left( \widetilde{\ft{\delta_B}} \, \psi \right)^\wedge (\lambda) = (\widetilde{\delta_B}*\ft{\psi})(\lambda),
\ \ \ \ (\lambda \in \RR^d)
\eeq
where we use the notation $\widetilde{f}(x) = \overline{f(-x)}$, extended by duality to tempered distributions.

We must show $\ft{\one_S}(\psi) = 0$.
We have
$$
\ft{\one_S}(\psi) = \ft{\one_S}\left(
    \widetilde{\ft{\delta_B}} \cdot {\psi \over \widetilde{\ft{\delta_B}} } \right).
$$
Notice that $\widetilde{\ft{\delta_B}}=\ft{\delta_B}$ (since $\delta_B$ is real),
so the quotient $\phi = \psi / \widetilde{\ft{\delta_B}}$ is a $C^\infty_0(K^c)$
function.
We have
\begin{eqnarray*}
\ft{\one_S}(\psi)
 &=& \ft{\one_S} \left( \widetilde{\ft{\delta_B}} \,\phi \right) \\
 &=& \one_S \left( \left( \widetilde{\ft{\delta_B}} \phi \right)^\wedge \right)
\ \ \text{(by the definition of the Fourier Transform for distributions)}\\
 &=& \int \one_S(\lambda) \left( \widetilde{\ft{\delta_B}} \phi \right)^\wedge (\lambda) \,d\lambda\\
 &=& \int \one_S(\lambda) \left(\widetilde{\delta_B}*\ft\phi\right)(\lambda) \,d\lambda
\ \ \text{(by \eqref{conv})}\\
 &=& \int \one_S(\lambda) \int \ft\phi(x)\,d\widetilde{\delta_B}(\lambda-x) \,d\lambda\\
 &=& \int \int \one_S(\lambda) \,d\delta_B(x-\lambda) \ \ft\phi(x)\,dx\\
 &=& \int \ft\phi(x)\,dx \ \ \text{(since $\one_S*\delta_B=1$)}\\
 &=& \phi(0)\\
 &=& 0
\ \ \mbox{(as $0 \notin \supp\phi$)}.
\end{eqnarray*}

\end{proof}

\begin{theorem}\label{th:vanishing}
If $S \subseteq \RR^d$ is a measurable Steinhaus set for the finite set $B \subseteq \RR^d$, with cardinality $\Abs{B}>1$, then
the trigonometric polynomial
$\ft{\delta_B}$ vanishes on a sphere in $\RR^d$ centered at the origin.
\end{theorem}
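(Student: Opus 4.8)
The plan is to exploit the rotational freedom in the Steinhaus condition together with Theorem~\ref{th:supp}. Write $S$ for the purported measurable Steinhaus set. For every $\rho \in O(d)$ the set $\rho S$ satisfies $\one_{\rho S} * \delta_B = 1$ almost everywhere, so Theorem~\ref{th:supp} applies to each rotate and gives
$$
\supp \ft{\one_{\rho S}} \subseteq \Set{0} \cup \Set{\ft{\delta_B} = 0}.
$$
Since $\one_{\rho S}(x) = \one_S(\rho^{-1} x)$ and $\rho$ is orthogonal, a change of variables yields $\ft{\one_{\rho S}}(\xi) = \ft{\one_S}(\rho^{-1}\xi)$ (as tempered distributions, by duality), hence $\supp \ft{\one_{\rho S}} = \rho\, \supp \ft{\one_S}$. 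Therefore
$$
\rho\, \supp \ft{\one_S} \subseteq \Set{0} \cup \Set{\ft{\delta_B} = 0} \qquad \text{for every } \rho \in O(d).
$$

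Next I would rule out the possibility that $\supp \ft{\one_S} \subseteq \Set{0}$. If this held, then $\ft{\one_S}$ would be a finite linear combination of derivatives of the Dirac mass at the origin, so $\one_S$ would agree almost everywhere with a polynomial; being bounded, that polynomial is a constant $c$, and since $\one_S$ takes only the values $0$ and $1$ we must have $c \in \Set{0,1}$. But then $\one_S * \delta_B$ is almost everywhere equal to the constant $\Abs{B} c \in \Set{0, \Abs{B}}$, and neither of these values is $1$ because $\Abs{B} > 1$; this contradicts the Steinhaus hypothesis. Hence $\supp \ft{\one_S}$ contains some point $\xi_0 \neq 0$.

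Finally, fix such a $\xi_0$. For every $\rho \in O(d)$ the point $\rho \xi_0$ lies in $\Set{0} \cup \Set{\ft{\delta_B} = 0}$, and since $\Abs{\rho\xi_0} = \Abs{\xi_0} > 0$ it cannot be the origin, so $\ft{\delta_B}(\rho\xi_0) = 0$. As $\rho$ ranges over $O(d)$ the points $\rho\xi_0$ fill out the entire sphere of radius $\Abs{\xi_0}$ centered at the origin, because $O(d)$ acts transitively on that sphere. Thus $\ft{\delta_B}$ vanishes identically on this sphere, which has positive radius since $\xi_0 \neq 0$, completing the proof. The only steps needing care are the distributional bookkeeping underlying Theorem~\ref{th:supp} (already carried out) and the elementary fact that a bounded tempered distribution whose spectrum is contained in $\Set{0}$ is almost everywhere constant; neither presents a genuine obstacle.
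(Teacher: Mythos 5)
Your proof is correct and follows essentially the same route as the paper: apply Theorem~\ref{th:supp} to each rotate $\rho S$, find a nonzero point in the support of $\ft{\one_S}$, and use the transitivity of $O(d)$ on spheres. You merely spell out two steps the paper leaves terse --- why $\supp\ft{\one_S}$ cannot be contained in $\Set{0}$ when $\Abs{B}>1$, and the identity $\supp\ft{\one_{\rho S}} = \rho\,\supp\ft{\one_S}$ behind ``varying $\rho$'' --- both of which you handle correctly.
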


\begin{proof}
Every isometry of $\RR^d$ can be written as $\tau = x+\rho\cdot$, for some $x \in \RR^d$ and some $\rho\in O(d)$,
so for all $x \in \RR^d, \rho \in O(d)$ we have
$$
\sum_{b \in B} \one_S(x+\rho b) = 1.
$$
This is in turn equivalent to
$$
\sum_{b \in B} \one_{\rho S}(x-b) = 1,\ \ \ \text{for all $x \in \RR^d, \rho \in O(d)$}.
$$
This can be written as
\beql{conv1}
\one_{\rho S}*\delta_B(x) = 1,\ \ \ \text{for all $x \in \RR^d$}.
\eeq
We only need to assume that \eqref{conv1} holds for almost all $x \in \RR^d$.

Since $\Abs{B}>1$ the set $\rho S$ cannot be almost all of $\RR^d$
so the tempered distribution $\ft{\one_{\rho S}}$ cannot have $\supp{\ft{\one_{\rho S}}} = \Set{0}$.
Suppose then that $0 \neq v \in \supp{\ft{\one_{\rho S}}}$. By Theorem \ref{th:supp} it follows that $\ft{\delta_B}(v) = 0$.
Varying $\rho$ we conclude that $\ft{\delta_B}$ vanishes on the entire sphere with radius $\Abs{v}$ centered at the origin.
\end{proof}

\begin{theorem}\label{th:circle}
If $\psi(x)$ is a trigonometric polynomial on $\RR^2$ which vanishes on a circle then it is identically zero.
\end{theorem}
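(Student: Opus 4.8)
The plan is to restrict $\psi$ to the (complexified) circle and reduce the statement to the linear independence of exponentials of one complex variable. Write $\psi(x)=\sum_{\lambda\in\Lambda}c_\lambda e^{2\pi i\lambda\cdot x}$ with $\Lambda\subseteq\RR^2$ finite and, arguing by contradiction, assume $\psi$ vanishes on the circle $\Set{x\in\RR^2:\Abs{x-x_0}=r}$, $r>0$, while $\Lambda\neq\emptyset$ and every $c_\lambda\neq0$. Writing $x_0=(a,b)$ and substituting, for $w\in\CC\setminus\Set{0}$,
\[
x_1=a+\tfrac r2\big(w+w^{-1}\big),\qquad x_2=b+\tfrac r{2i}\big(w-w^{-1}\big)
\]
(so that $w=e^{i\theta}$ traces the real circle), a direct computation gives $\lambda\cdot x=\lambda\cdot x_0+\tfrac r2(\lambda_1-i\lambda_2)w+\tfrac r2(\lambda_1+i\lambda_2)w^{-1}$, hence
\[
F(w):=\psi(x)=\sum_{\lambda\in\Lambda}d_\lambda\exp\big(\mu_\lambda w+\nu_\lambda w^{-1}\big),
\]
where $d_\lambda=c_\lambda e^{2\pi i\lambda\cdot x_0}\neq0$, $\mu_\lambda=\pi i r(\lambda_1-i\lambda_2)$ and $\nu_\lambda=\pi i r(\lambda_1+i\lambda_2)$. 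Since $\psi$ vanishes on the real circle, $F$ vanishes on $\Set{\Abs w=1}$; as $F$ is holomorphic on the connected set $\CC\setminus\Set{0}$, the identity theorem forces $F\equiv0$ there.

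I would then isolate two elementary facts. First, since $r\neq0$ and the frequencies $\lambda$ are real, $\lambda\mapsto\mu_\lambda$ is injective on $\RR^2$ — indeed $\operatorname{Re}\mu_\lambda=\pi r\lambda_2$ and $\operatorname{Im}\mu_\lambda=\pi r\lambda_1$ — so the numbers $\mu_\lambda$, $\lambda\in\Lambda$, are pairwise distinct. Second, for any two distinct $\mu,\mu'\in\CC$ the equation $\operatorname{Re}\big(e^{i\phi}(\mu-\mu')\big)=0$ has exactly two solutions $\phi\in[0,2\pi)$; hence there are only finitely many $\phi$ for which the real numbers $\operatorname{Re}(e^{i\phi}\mu_\lambda)$, $\lambda\in\Lambda$, fail to be pairwise distinct. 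Fix a $\phi$ outside this finite set and let $\lambda^*\in\Lambda$ be the \emph{unique} index maximizing $\operatorname{Re}(e^{i\phi}\mu_\lambda)$.

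Finally I would recover $d_{\lambda^*}$ by letting $w=Re^{i\phi}$ with $R\to+\infty$. Multiplying the identity $F(Re^{i\phi})=0$ by $\exp(-\mu_{\lambda^*}Re^{i\phi})$ yields
\[
0=\sum_{\lambda\in\Lambda}d_\lambda\exp\!\Big((\mu_\lambda-\mu_{\lambda^*})Re^{i\phi}+\nu_\lambda R^{-1}e^{-i\phi}\Big).
\]
The $\lambda=\lambda^*$ summand tends to $d_{\lambda^*}$ as $R\to\infty$, since its exponent is $\nu_{\lambda^*}R^{-1}e^{-i\phi}\to0$. For $\lambda\neq\lambda^*$ the exponent has real part $R\big(\operatorname{Re}(e^{i\phi}\mu_\lambda)-\operatorname{Re}(e^{i\phi}\mu_{\lambda^*})\big)+O(R^{-1})\to-\infty$, so that summand tends to $0$. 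Therefore $d_{\lambda^*}=0$, contradicting $c_{\lambda^*}\neq0$; hence $\Lambda=\emptyset$, i.e.\ $\psi\equiv0$.

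The only genuine subtlety — and the single step I expect to need care — is that several frequencies may share the same value of $\operatorname{Re}\mu_\lambda$, so that no exponential dominates along the positive real axis and the naive asymptotics along $w=R\to+\infty$ fail; choosing a generic direction $\phi$, via the second elementary fact above, is precisely what removes this obstruction. The substitution identities, the analytic continuation, and the limits are all routine.
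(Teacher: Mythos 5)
Your proof is correct and follows essentially the same route as the paper: parametrize the circle by a complex variable, extend to a function holomorphic on $\CC\setminus\Set{0}$, invoke analytic continuation, and kill the coefficients one at a time by a dominant-exponential asymptotic as the variable goes to infinity. The only (minor) difference is how uniqueness of the dominant term is arranged --- you pick a generic direction $\phi$ so that the real parts $\Re(e^{i\phi}\mu_\lambda)$ are pairwise distinct, whereas the paper translates the frequency set so that the frequency of maximal modulus is unique and then aims the ray at it; both work.
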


\begin{proof}
Suppose, after appropriately modulating the coefficients and scaling the frequencies $b \in B$ (a finite set in the plane), that
$$
\psi(x, y) = \sum_{b \in B} c_b e^{2\pi i b\cdot(x,y)}
$$
vanishes on the unit circle centered at the origin.
View the frequencies $b \in B$ as complex numbers $b = b_1+i b_2$ and write also $z = x-iy$,
so that for $\Abs{z}=1$ the function
$$
f(z) = \sum_{b \in B} c_b e^{2\pi i \Re(bz)} = \sum_{b \in B} c_b e^{\pi i(bz+\overline{b}/z)}
$$
vanishes identically. The function $f(z)$ is defined and analytic on $\CC\setminus\Set{0}$, hence, by
analytic continuation, vanishes identically for all $z\neq 0$.

Let $b_0 \in B$ have maximal modulus. We may suppose that $b_0$ is unique by appropriately translating the set $B$,
an operation which does not change the zeros of $\psi(x,y)$.
Write $z=\theta t$, where $\Abs{\theta}=1$ and $t \to + \infty$ and select $\theta$ of modulus 1 to be such that
$$
\pi i b_0 \theta = \pi \Abs{b_0}.
$$
For all $b \in B \setminus \Set{b_0}$ we have $\Re(\pi i b \theta) < \pi \Abs{b_0}$.
We have, as $t \to +\infty$,
$$
0 = f(\theta t) = c_{b_0} e^{\pi \Abs{b_0} t + O(1/t)} + \sum_{b\in B \setminus\Set{b_0}} c_b e^{\pi i b \theta t + O(1/t)}
$$
and the modulus of every term in the sum above is $\Abs{c_b} e^{\Re(\pi i b \theta)t + O(1/t)}$.
Since the first term in the sum above is dominant we have reached a contradiction.
\end{proof}

\begin{corollary}\label{cor:final}
If $\psi(x)$ is a trigonometric polynomial on $\RR^d$, $d>1$, which vanishes on a sphere in $\RR^d$
then it is identically zero.
\end{corollary}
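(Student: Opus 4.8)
\emph{Proof proposal.} The plan is to reduce Corollary~\ref{cor:final} to the two-dimensional result of Theorem~\ref{th:circle} by slicing $\RR^d$ with affine $2$-planes through the center of the sphere. Write $\psi(x) = \sum_{b\in B} c_b e^{2\pi i b\cdot x}$ with $B\subseteq\RR^d$ finite, and suppose $\psi$ vanishes on the sphere $\Sigma = \Set{x\in\RR^d : \Abs{x-c}=r}$ for some $c\in\RR^d$ and some $r>0$. On any $2$-dimensional affine plane through $c$ the function $\psi$ restricts to a trigonometric polynomial on $\RR^2$ which vanishes on a genuine circle, hence (by Theorem~\ref{th:circle}) vanishes identically on that plane; since every point of $\RR^d$ lies on such a plane, $\psi\equiv 0$.

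In detail: fix a $2$-dimensional linear subspace $V\subseteq\RR^d$ with orthonormal basis $u_1,u_2$ and parametrize the affine plane $\Pi=c+V$ by $(s,t)\mapsto c+su_1+tu_2$. For each frequency $b\in B$ we have $b\cdot(c+su_1+tu_2)=b\cdot c+s(b\cdot u_1)+t(b\cdot u_2)$, so
$$
\psi(c+su_1+tu_2)=\sum_{b\in B}\bigl(c_b e^{2\pi i b\cdot c}\bigr)\,e^{2\pi i\left((b\cdot u_1)s+(b\cdot u_2)t\right)},
$$
which is exactly a trigonometric polynomial in $(s,t)\in\RR^2$, with frequency set $\Set{(b\cdot u_1,b\cdot u_2):b\in B}$ (the orthogonal projections of the $b$'s onto $V$) and coefficients modified by the unimodular factors $e^{2\pi i b\cdot c}$. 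Moreover $\Sigma\cap\Pi$ corresponds in these coordinates to $\Set{(s,t):s^2+t^2=r^2}$, a circle of radius $r>0$ — here we use that $\Pi$ passes through $c$, so the slice is a nondegenerate circle rather than a point or the empty set. Since $\psi$ vanishes on $\Sigma$, this trigonometric polynomial vanishes on that circle, and Theorem~\ref{th:circle} gives $\psi\equiv 0$ on all of $\Pi$.

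Finally, given any $x\in\RR^d$, choose a $2$-dimensional subspace $V$ containing $x-c$ (any $V$ if $x=c$); then $x\in c+V$, and by the previous paragraph $\psi(x)=0$. Hence $\psi$ is identically zero.

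I do not expect a genuine obstacle here: this is a routine slicing reduction. The only points needing a little care are (i) verifying that restriction to a $2$-plane again produces a trigonometric polynomial — which amounts to the elementary computation above, projecting the frequencies onto $V$ — and (ii) insisting that the slicing planes pass through the center $c$, so that each slice of $\Sigma$ is a full circle of radius $r$ and Theorem~\ref{th:circle} actually applies.
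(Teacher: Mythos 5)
Your proof is correct and follows essentially the same route as the paper: restrict $\psi$ to $2$-planes, observe the restriction is a two-variable trigonometric polynomial vanishing on a circle, and invoke Theorem~\ref{th:circle}. In fact your version is slightly more careful than the paper's one-line argument, since you insist the planes pass through the center so each slice of the sphere is a genuine circle.
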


\begin{proof}
Restricting $\psi$ on any 2-plane in $\RR^d$ leaves us with a two-variable trigonometric polynomial that vanishes on a circle.
It follows from Theorem \ref{th:circle} that the restriction of $\psi$ on any 2-plane is identically 0, hence $\psi$ is
identically zero on $\RR^d$.
\end{proof}

From Theorem \ref{th:vanishing} and Corollary \ref{cor:final} we would have a contradiction if $S$ were a Steinhaus set
for $B$. The proof of Theorem \ref{th:main} is complete.

%%%%%%%%%%%%%%%%%%%%%%%%%%%%%%%%%%%%%%%%%%%%%%%%%%%%%%%%%%%%
\section{No measurable Steinhaus sets for the integers in the plane}\label{sec:z-proof}

In this section we prove Theorem \ref{th:z}.
We assume that $S$ is a measurable Steinhaus set for $\ZZ\times\Set{0}$ in $\RR^2$.
First we show that such a set $S$ must have a very strong property.

\begin{lemma}\label{lm:lines}
Suppose $S \subseteq \RR^2$ is Lebesgue measurable and for all $\rho \in O(2)$ we have
\beql{tiling-with-z}
\sum_{n \in \ZZ\times\Set{0}} \one_S(x-\rho n) = 1,\ \ \ \ \text{for almost all $x \in \RR^2$.}
\eeq
Then for every $\rho\in O(2)$ and for almost all lines $L$ in the plane parallel to $\rho(\RR\times\Set{0})$ we have
\beql{intersection}
\Abs{S \cap L} = 1,
\eeq
that is $S$ has measure 1 when restriced on almost any straight line $L$ of any slope.

We also have that for almost all points $(x_0, y_0) \in \RR^2$ almost all lines $L$ through $(x_0, y_0)$
have property \eqref{intersection}.
\end{lemma}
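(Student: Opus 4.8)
The plan is to establish \eqref{intersection} first for almost every line of a \emph{fixed} slope — by reducing to horizontal lines and invoking the elementary fact that a measurable subset of $\RR$ which tiles by $\ZZ$ has measure exactly $1$ — and then to deduce the ``almost every point'' statement from this via Fubini's theorem in the three-dimensional space of (point, direction) pairs. For the reduction I would fix $\rho\in O(2)$, substitute $x=\rho y$ in \eqref{tiling-with-z}, and use that $\rho$ preserves null sets to obtain $\sum_{n\in\ZZ}\one_{\rho^{-1}S}(y-(n,0))=1$ for almost every $y\in\RR^2$; since $S\cap\rho(\RR\times\Set{c})$ and $(\rho^{-1}S)\cap(\RR\times\Set{c})$ have the same one-dimensional measure for every $c$, it suffices to treat $\rho=I$ and to prove that for almost every $c\in\RR$ the slice $S_c:=\Set{t\in\RR:(t,c)\in S}$ has measure $1$.

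For the horizontal case I would apply Fubini's theorem to get that for almost every $c$ the slice $S_c$ is measurable and $\sum_{n\in\ZZ}\one_{S_c}(t-n)=1$ for almost every $t$; fixing such a $c$, writing $\one_{S_c}(t-n)=\one_{S_c+n}(t)$ and integrating over $t\in[0,1)$ gives $\sum_{n\in\ZZ}\Abs{(S_c+n)\cap[0,1)}=1$, and since $\Abs{(S_c+n)\cap[0,1)}=\Abs{S_c\cap[-n,-n+1)}$, summing over $n\in\ZZ$ yields $\Abs{S_c}=1$. Thus \eqref{intersection} holds for almost every line of slope $0$, hence, by the reduction above, for almost every line of any prescribed slope.

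For the statement about almost every point I would parametrize a line by a pair $(\theta,s)\in[0,\pi)\times\RR$ (direction and signed distance from the origin) and a point on it by its arc-length coordinate $t$, setting
$$
\Psi(\theta,s,t)=s\,(-\sin\theta,\cos\theta)+t\,(\cos\theta,\sin\theta),\qquad L_{\theta,s}:=\Set{\Psi(\theta,s,t):t\in\RR}.
$$
For each fixed $\theta$ the map $(s,t)\mapsto\Psi(\theta,s,t)$ is orthogonal, hence measure preserving, and $(\theta,s,t)\mapsto(\Psi(\theta,s,t),\theta)$ is a smooth bijection of $[0,\pi)\times\RR^2$ onto $\RR^2\times[0,\pi)$ with unit Jacobian. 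Choosing a Borel set $S_0$ with $\Abs{S\triangle S_0}=0$, the function $\one_{S_0}\circ\Psi$ is Borel, while $\one_S\circ\Psi$ differs from it only on $\Psi^{-1}(S\triangle S_0)$, a null set because each of its $\theta$-slices is an orthogonal image of a null set; hence $\one_S\circ\Psi$ is Lebesgue measurable and, by Tonelli's theorem, $h(\theta,s):=\int_{\RR}\one_S(\Psi(\theta,s,t))\,dt=\Abs{S\cap L_{\theta,s}}$ is measurable on $[0,\pi)\times\RR$. Consequently the set of pairs $(p,\theta)$ for which the line through $p$ in direction $\theta$ fails \eqref{intersection} is the image, under the unit-Jacobian bijection above, of $\Set{(\theta,s,t):h(\theta,s)\neq1}$, hence is measurable in $\RR^2\times[0,\pi)$; for each fixed $\theta$ its slice equals, in $(s,t)$-coordinates, the set $N_\theta\times\RR$ with $N_\theta=\Set{s:h(\theta,s)\neq1}$, and $\Abs{N_\theta}=0$ by the previous paragraph. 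So this set is null in $\RR^2\times[0,\pi)$, and a final application of Fubini's theorem gives that for almost every $p\in\RR^2$ the set of directions $\theta$ for which the line through $p$ of direction $\theta$ fails \eqref{intersection} is null — which is the second assertion of the lemma.

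The one substantive point, and the step I expect to be the main obstacle, is the joint measurability of $(p,\theta)\mapsto\Abs{S\cap L}$: since Lebesgue measurability is not preserved under preimages by the smooth parametrization $\Psi$, one cannot work with $\one_S$ directly and must instead pass to a Borel representative $S_0$ of $S$ and control $\Psi^{-1}(S\triangle S_0)$ slice by slice in $\theta$. Everything else is routine bookkeeping with Fubini's theorem together with the fact that a measurable tiling set for $\ZZ$ in $\RR$ has measure one.
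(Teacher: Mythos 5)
Your proposal is correct, and its first half --- reducing to horizontal lines by an orthogonal change of variable, then slicing by Fubini and integrating the tiling identity over a unit period to get $\Abs{S_c}=1$ --- is exactly the paper's argument, written out in more detail. The second half reaches the same conclusion by a genuinely different route. The paper parametrizes non-horizontal lines by (intersection with the $x$-axis, angle $\theta$), passes to the redundant parametrization $(x_0,y_0,\theta)$ via a transition map $\phi$ whose Jacobian is not globally bounded, and therefore runs a covering argument: the exceptional set $E$ is covered by rectangles of total measure $<\epsilon$, and preimages are controlled on bounded boxes by $\Abs{\phi^{-1}(I)\cap Q_R}=O(R)\Abs{I}$. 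You instead choose the normal parametrization $(\theta,s)$ of lines together with arc length $t$ along the line, for which the transition $(\theta,s,t)\mapsto(\Psi(\theta,s,t),\theta)$ is a diffeomorphism with unit Jacobian; the null exceptional set then transfers directly, with no covering argument needed. You also make explicit a point the paper leaves implicit: to run Fubini on the exceptional set one needs it to be (Lebesgue) measurable --- null slices alone do not force outer measure zero --- and your passage to a Borel representative $S_0$ of $S$, controlling $\Psi^{-1}(S\triangle S_0)$ slice by slice in $\theta$, supplies exactly the joint measurability of $(\theta,s)\mapsto\Abs{S\cap L_{\theta,s}}$ that this requires. The only cost of your route is the extra bookkeeping with $S_0$; the paper's route avoids naming the measurability issue but pays for it with the quantitative $O(R)$ bound on preimages of rectangles.
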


\begin{proof}
The function on the left of \eqref{tiling-with-z}
is equal to 1 almost everywhere so for almost all lines $L$ parallel to $\rho(\RR\times\Set{0})$
the function is measurable and equal to 1 almost everywhere as a function of one variable.
For any such line $L$ integrate \eqref{tiling-with-z} for $x$ in a unit line segment on $L$ to get that $\Abs{S \cap L} = 1$.

Ignoring lines parallel to the $x$-axis we can uniquely parametrize all lines by the pair $(x, \theta)$, with $\theta \in (0,\pi)$ being
the angle the line forms with the $x$-axis and $x \in \RR$ being the point of intersection of the line with the $x$-axis.
So far we have shown that for all $\theta$ and almost all $x$ the corresponding line $L$ satisfies \eqref{intersection}.

We can also redundantly parametrize all lines not parallel to the $x$-axis
by the triple $(x_0, y_0, \theta)$, this line being the one through point $(x_0, y_0)$
forming angle $\theta$ with the $x$-axis and let
$$
\phi(x_0, y_0, \theta) = (x, \theta)
$$
be the continuous function that translates from one parametrization to the other.

Let $E \subseteq \RR\times (0,\pi)$ be the exceptional set for property \eqref{intersection} in the first parametrization
$$
E = \Set{(x, \theta): \text{the line $(x, \theta)$ fails \eqref{intersection}}}.
$$
Then $E$ has measure 0 and therefore for any $\epsilon>0$
we can find an open cover $E \subseteq \bigcup_n I_n$ of intervals (rectangles) with
$\sum_n\Abs{I_n} < \epsilon$.
Fix any $R>0$ and let $Q_R = [-R,R]\times[-R,R]\times(0,\pi)$.
By the simple geometry we deduce that for any interval $I$ we have
$$
\Abs{\phi^{-1}(I) \cap Q_R} = O(R)\Abs{I}.
$$
In fact, if $I=J\times K$, then, fixing any $\theta\in K$, the set of $(x_0,y_0)$ such that $\phi(x_0,y_0, \theta)\in J\times K$ has measure $O(R)\Abs{J}$. 
Therefore $\Abs{\phi^{-1}(I) \cap Q_R} =O(R)\Abs{J}\Abs{K}=O(R)\Abs{I}$. Now, since $\epsilon>0$ is arbitrary, 
we get that $\Abs{\phi^{-1}(E) \cap Q_R} = 0$, and, since $R>0$ is arbitrary, $\Abs{\phi^{-1}(E)} = 0$.
We have proved that for almost all triples $(x_0, y_0, \theta) \in \RR^2 \times (0, \pi)$ the line
indexed by $(x_0, y_0, \theta)$ has property \eqref{intersection}. Applying Fubini's theorem we conclude that
for almost all $(x_0, y_0) \in \RR^2$ for almost all $\theta \in (0, \pi)$ the line $L$ indexed by $(x_0, y_0, \theta)$
has property \eqref{intersection}.
\end{proof}

We now show that no Lebesgue measurable set $S$ can have the property described in Lemma \ref{lm:lines}.
Theorem \ref{th:harmonic} completes the proof of Theorem \ref{th:z}.

\begin{theorem}\label{th:harmonic}
There is no Lebesgue measurable $S \subseteq \RR^2$ which
for almost all $(x_0, y_0) \in \RR^2$
intersects
almost all lines through $(x_0, y_0)$ in measure 1.
\end{theorem}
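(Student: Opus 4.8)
The plan is to suppose such a set $S$ exists and to show that this forces $\one_S$ to agree almost everywhere with a constant, which is absurd. The bridge between the hypothesis, which only concerns one‑dimensional measures on lines, and a global statement about $\one_S$ will be the Riesz potential
\[
F(p)\;=\;\int_{\RR^2}\frac{\one_S(x)}{\Abs{x-p}}\,dx .
\]

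The heart of the argument is to prove that $F(p)=\pi$ for almost every $p\in\RR^2$. Fix a point $p$ such that almost every line through $p$ meets $S$ in measure $1$; by hypothesis these $p$ form a set of full measure. Pass to polar coordinates $x=p+r\omega_\theta$, $\omega_\theta=(\cos\theta,\sin\theta)$, centred at $p$, and set $g(\theta)=\int_0^\infty\one_S(p+r\omega_\theta)\,dr$ for $\theta\in[0,2\pi)$. The line through $p$ of inclination $\theta$ is the union of the two opposite rays, so the hypothesis says $g(\theta)+g(\theta+\pi)=1$ for almost every $\theta$ (and $0\le g\le1$). Hence
\[
F(p)=\int_0^{2\pi}\!\!\int_0^\infty\frac{\one_S(p+r\omega_\theta)}{r}\,r\,dr\,d\theta=\int_0^{2\pi}g(\theta)\,d\theta=\int_0^{\pi}\bigl(g(\theta)+g(\theta+\pi)\bigr)\,d\theta=\pi .
\]
So $\one_S*k_0$, with $k_0(x)=\Abs{x}^{-1}$, is the constant $\pi$; Tonelli's theorem (using $F\equiv\pi$ a.e.\ and the local integrability of $k_0$) upgrades this to the identity $\int\one_S(y)\,(h*k_0)(y)\,dy=\pi\int h$, valid for every Schwartz function $h$.

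Next I would show $\supp\ft{\one_S}\subseteq\Set{0}$. In the plane $\ft{k_0}(\xi)$ is a positive constant times $\Abs{\xi}^{-1}$, a function that is smooth and nowhere zero away from the origin, so for any test function $\psi\in C_c^\infty(\RR^2\setminus\Set{0})$ we may ``divide by $\ft{k_0}$'': define $h$ so that $\ft h(\xi)$ equals the corresponding constant multiple of $\Abs{\xi}\,\psi(-\xi)$, which is again in $C_c^\infty(\RR^2\setminus\Set{0})$, so $h$ is Schwartz. Then $h*k_0=\ft\psi$ — the convolution of a Schwartz function with a tempered distribution obeys the convolution theorem, and the factors $\Abs{\xi}$ and $\Abs{\xi}^{-1}$ cancel because $\ft h$ vanishes near the origin — while $\int h=\ft h(0)=0$ precisely because of the extra factor $\Abs{\xi}$. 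Therefore $\ft{\one_S}(\psi)=\one_S(\ft\psi)=\int\one_S\,(h*k_0)=\pi\int h=0$. As $\psi$ was an arbitrary test function supported off the origin, $\ft{\one_S}$ is supported at $\Set{0}$; hence $\one_S$ coincides a.e.\ with a polynomial, and being bounded that polynomial is constant, so $\one_S=c$ a.e.\ with $c\in\Set{0,1}$. But then almost every line meets $S$ in measure $0$ (if $c=0$) or $+\infty$ (if $c=1$), never in measure $1$, a contradiction.

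The geometry here is soft; the point that needs care is the Fourier step, because $S$ is necessarily of infinite Lebesgue measure (integrate the line condition over all parallel lines in one direction), so $\one_S\notin L^1$ and $\ft{\one_S}$ is a genuine distribution rather than a function — this is why one cannot simply Fourier‑transform $\one_S*k_0$ and must instead argue against test functions and ``divide away from the origin''. One must also check that the hypothesis legitimately supports the polar‑coordinate computation: that for a.e.\ $p$ the restriction of $\one_S$ to a.e.\ line through $p$ is measurable, and that the lines through a point are parametrised in a measure‑theoretically harmless way. This is precisely the information provided by Lemma \ref{lm:lines} together with Fubini's theorem.
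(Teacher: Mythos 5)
Your proof is correct, but it takes a genuinely different route from the paper's after the common first step. Both arguments begin identically: the Riesz potential $F=\one_S*\Abs{\cdot}^{-1}$ is shown to equal $\pi$ at almost every point of the plane by the same polar--coordinate computation. From there the paper goes \emph{up}: it extends the potential to $\RR^3$, proves it is continuous, bounded by $\pi$, and harmonic in the open upper half-space, and then invokes the Poisson-integral representation of bounded harmonic functions to force $f\equiv\pi$ above the plane, contradicting the obvious decay as the third coordinate tends to $+\infty$. You instead stay in $\RR^2$ and pass to the Fourier side: since $\ft{\Abs{\cdot}^{-1}}$ is a positive multiple of $\Abs{\xi}^{-1}$ in the plane and hence nonvanishing off the origin, the a.e.\ constancy of $F$ forces $\supp\ft{\one_S}\subseteq\Set{0}$, so $\one_S$ agrees a.e.\ with a bounded polynomial, i.e.\ a constant, which is absurd. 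The steps you flag as delicate (Tonelli via $F\equiv\pi$, convolution of a Schwartz function with the tempered distribution $\Abs{\cdot}^{-1}$, division by $\Abs{\xi}^{-1}$ away from the origin, a distribution supported at a point being a finite sum of derivatives of $\delta$) are all standard and correctly handled. It is worth noting that the paper advertises this section as being ``without Fourier Analysis'' and remarks that a Fourier approach is hampered by $S$ having infinite measure; your argument shows the Fourier route does go through once one tests $\ft{\one_S}$ only against functions supported off the origin, which is where your approach is arguably cleaner (it also avoids the paper's somewhat fiddly continuity argument for $f$ on $\RR^3$). What the harmonic-extension argument buys that yours does not is the stronger Theorem \ref{th:harmonic-bounds}: if the line measures are only assumed to lie between $m>0$ and $M<\infty$, the extension is still a bounded harmonic function with boundary values at least $\pi m$, and the same contradiction with decay follows, whereas your argument uses the exact constancy of $F$ in an essential way and does not obviously adapt.
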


\begin{proof}
Define the function $f:\RR^3\to\RR^+$ by
$$
f(z) = \int_{\RR^2} \one_S(w) \Abs{z-w}^{-1} \,dw.
$$

\noindent{\bf Claim:} $f(z) = \pi$ for almost all $z \in \RR^2$.

Indeed with $z \in \RR^2$ we have
\begin{align*}
f(z) &= \int_{\RR^2} \one_S(w) \frac{dw}{\Abs{z-w}}\\
 &= \int_{\RR^2} \one_S(z+w) \frac{dw}{\Abs{w}}\ \ \ \ \text{(change of variable)}\\
 &= \int_{[0,\pi]} \int_{\RR} \one_S(z+r(\cos\theta, \sin\theta)) \, dr \, d\theta\ \ \ \ \text{(polar coordinates)}\\
 &= \int_{[0,\pi]} 1 \,d\theta\ \ \ \ \text{(the integral is 1 for almost all $z$ and $\theta$)}\\
 &= \pi.
\end{align*}

\noindent{\bf Claim:} The function $f(z)$ is continuous in $\RR^3$.

To prove the claim we consider the orthogonal projection $z_0'$ of $z_0 \in \RR^3$ onto the plane $\RR^2$ of the first two coordinates and the disk
$D$ in $\RR^2$ centered at $z_0'$ with radius 1. Then
$$
f(z) =  \int_{D} \one_S(w) \frac{dw}{\Abs{z-w}} + \int_{D^c} \one_S(w) \frac{dw}{\Abs{z-w}}=g(z)+h(z).
$$

Now, if $z \to z_0$, then $g(z) \to g(z_0)$.

To show this we let $\delta=\Abs{z-z_0}$ and we write
$$
\Abs{g(z)-g(z_0)} \le \int_{D} \Abs{\frac 1{\Abs{z-w}}-\frac 1{\Abs{z_0-w}}}dw = \int_{D(2\delta)} \cdots + \int_{D\setminus D(2\delta)}\cdots
$$
where $D(2\delta)$ is the disk centered at $z_0'$ with radius $2\delta$.

Now
$$\int_{D(2\delta)} \cdots = \int_{D(2\delta)} \frac 1{\Abs{z-w}} dw + \int_{D(2\delta)} \frac 1{\Abs{z_0-w}} dw \le 2\int_{|u|\le 3\delta}\frac 1{\Abs{u}} du 
= O(\delta).
$$
Also
$$
\int_{D\setminus D(2\delta)}\cdots \le \int_{D\setminus D(2\delta)}\frac{\delta}{\Abs{z-w}\Abs{z_0-w}} dw \le \int_{\delta \le |u|\le 1}\frac{\delta}{\Abs{u}^2} du = O(\delta\log\frac 1{\delta}).
$$
Hence $g(z) \to g(z_0)$.

To prove that $h(z) \to h(z_0)$ when $z \to z_0$ we may consider a good point
$z_0''\in \RR^2$ such that $\Abs{z_0''-z_0'} < 1/2$ and we may also assume that 
$\Abs{z-z_0} < 1/2$. 
(We call a point of $\RR^2$ {\em good} if almost all lines through the point intersect $S$ at measure 1.
Almost every point of $\RR^2$ is good by Lemma \ref{lm:lines}.)
This implies that 
$$
\frac 1{\Abs{z-w}} \le \frac 3{\Abs{z_0''-w}}
$$
for all $w\in D^c$. Since
$$
\int_{D^c} \one_S(w) \frac{dw}{\Abs{z_0''-w}} \le \int_{\RR^2} \one_S(w) \frac{dw}{\Abs{z_0''-w}} = \pi,
$$
by dominated convergence we get $h(z) \to h(z_0)$ and the claim has been proved.

As a result we now have that $f(z) = \pi$ for all $z \in \RR^2$.

\noindent{\bf Claim:} The function $f(z)$ is harmonic in the upper half-space
$$
H=\Set{(x_1, x_2, x_3) \in \RR^3:\ x_3>0}.
$$

It is enough to show that $f(z)$ satisfies the mean value property in $H$.

Suppose $z_0 \in H$ and let $\Sigma$ be the surface of a sphere centered at $z_0$ and contained in $H$.
Let also $\sigma$ denote the surface measure of $\Sigma$. We have
\begin{align*}
\frac{1}{\sigma(\Sigma)} \int_\Sigma f(z) \, d\sigma(z)
 &= \frac{1}{\sigma(\Sigma)} \int_\Sigma \int_{\RR^2} \one_S(w) \frac{dw}{\Abs{z-w}} \,d\sigma(z)\\
 &= \int_{\RR^2} \one_S(w) \frac{1}{\sigma(\Sigma)} \int_\Sigma \frac{d\sigma(z)}{\Abs{z-w}} \,dw\\
 &= \int_{\RR^2} \one_S(w) \frac{dw}{\Abs{z_0-w}}\ \ \ \ \text{(as $1/\Abs{\cdot-w}$ is harmonic in $\overline\Sigma$)}\\
 &= f(z_0),
\end{align*}
so that $f(z)$ has the mean value property at $z_0$.

Considering the orthogonal projection $z'$ of $z \in \RR^3$ onto the plane $\RR^2$ and using $f(z') = \pi$ we get
$$0 \le f(z) \le \pi$$
for all $z\in\RR^3$.

%By the reflection principle with respect to the plane $\RR^2$ if we extend the function
%$f$ to the lower half-space by the formula
%$$
%f(x_1, x_2, x_3) = 2\pi - f(x_1, x_2, -x_3),\ \ \ \ (x_3 < 0),
%$$
%then we obtain a harmonic function $f$ defined on $\RR^3$,
%which happens to be bounded, and therefore a constant, i.e., $f(x) = \pi$ everywhere in $\RR^3$.
%But $f(x)$ is obviously strictly decreasing in the variable $x_3>0$, a contradiction which completes the proof of Theorem \ref{th:harmonic}.

Any function which is harmonic in the open upper half-space, bounded and continuous in the closed upper half-space is
the Poisson integral of its restriction on the plane (see e.g.\ \cite[Theorem 7.5]{axler2001harmonic}). Since $f$
is constant on the plane we deduce that $f \equiv \pi$ everywhere in the upper half-space. But this is a contradiction
as clearly $\lim_{z \to +\infty} f(x, y, z) = 0$ for all $(x, y) \in \RR^2$.

\end{proof}

\noindent{\bf Remark.}
It is not hard to see that with the same proof as Theorem \ref{th:harmonic}
we can prove the following stronger result.
\begin{theorem}\label{th:harmonic-bounds}
There is no Lebesgue measurable set $S \subseteq \RR^2$ which intersects almost all lines of the plane
at a measure bounded below by $m>0$ and above by $M<\infty$.
\end{theorem}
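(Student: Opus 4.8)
The plan is to re-run the proof of Theorem~\ref{th:harmonic} essentially verbatim, replacing the exact value $1$ everywhere by quantities confined to the interval $[m,M]$ and weakening the concluding rigidity step to a positivity step. First I would convert the hypothesis into the pointwise form that the proof of Theorem~\ref{th:harmonic} actually uses. Parametrizing the lines of the plane (other than those parallel to the $x$-axis) both by the pair $(x,\theta)$ and, redundantly, by the triple $(x_0,y_0,\theta)$, and invoking the change-of-parametrization map $\phi$ together with the geometric estimate $\Abs{\phi^{-1}(I)\cap Q_R}=O(R)\Abs{I}$ exactly as in Lemma~\ref{lm:lines}, a null set of lines in the $(x,\theta)$-parametrization pulls back to a null set of triples. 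Hence, if almost every line of the plane meets $S$ in measure lying in $[m,M]$, then for almost every $p\in\RR^2$ almost every line through $p$ meets $S$ in measure lying in $[m,M]$; as before call such a $p$ \emph{good}, note that almost every point is good, and introduce the potential $f(z)=\int_{\RR^2}\one_S(w)\Abs{z-w}^{-1}\,dw$ for $z\in\RR^3$.

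Next I would establish the same structural facts about $f$ as in the proof of Theorem~\ref{th:harmonic}. For finiteness and continuity on $\RR^3$: evaluating $f$ at a good point $z_0''$ in polar coordinates gives $f(z_0'')=\int_0^\pi\Abs{S\cap L}\,d\theta\le\pi M<\infty$, where $L$ runs over the lines through $z_0''$, and with this in hand the split $f=g+h$ of that proof carries over unchanged, the dominating function for the far part being $3\,\one_S(w)\Abs{z_0''-w}^{-1}$, which is integrable because $f(z_0'')\le\pi M$. The same polar computation at an arbitrary good $z\in\RR^2$ yields $\pi m\le f(z)\le\pi M$ for almost every, hence (by continuity) for every, $z\in\RR^2$. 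And $f$ is harmonic in the open upper half-space $H\subseteq\RR^3$: the spherical mean-value property follows, as before, by Fubini together with the harmonicity of $\Abs{\cdot-w}^{-1}$ off $w$. Finally, since $\Abs{z-w}\ge\Abs{z'-w}$ for the orthogonal projection $z'$ of $z$ onto $\RR^2$, we get $0\le f(z)\le f(z')\le\pi M$ on all of $\overline H$, so $f$ is bounded.

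To finish I would invoke the Poisson representation for bounded harmonic functions in a half-space \cite[Theorem 7.5]{axler2001harmonic}: $f$ in $H$ equals the Poisson integral of its restriction to $\RR^2$. Since that restriction is everywhere $\ge\pi m$, the Poisson integral is everywhere $\ge\pi m$, so $f(x,y,t)\ge\pi m>0$ for all $t>0$. This contradicts $f(x,y,t)\to 0$ as $t\to+\infty$, which holds by dominated convergence because $\Abs{(x,y,t)-w}\ge\Abs{(x,y,0)-w}$ with the left-hand side tending to $\infty$ pointwise, while $\one_S(w)\Abs{(x,y,0)-w}^{-1}$ is an integrable majorant (its integral is $f(x,y,0)\le\pi M$).

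The one step that needs a moment's thought is the continuity of $f$: one must check that the upper bound $M$ does everything that the exact value $1$ did in Theorem~\ref{th:harmonic}. But the continuity argument there used only the inequality ``$\le\pi$'', never the equality ``$=\pi$'', so replacing $\pi$ by $\pi M$ throughout costs nothing; and the boundedness of $f$ on all of $H$, as opposed to just on the boundary plane, is supplied by the projection inequality $\Abs{z-w}\ge\Abs{z'-w}$. The only genuinely new feature relative to Theorem~\ref{th:harmonic} is that we no longer conclude that $f$ is constant; instead the contradiction is driven by the single one-sided bound $f|_{\RR^2}\ge\pi m>0$, which is why the lower bound $m>0$ is essential while the exact values are not.
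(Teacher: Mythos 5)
Your proposal is correct and follows exactly the route the paper intends: the paper proves Theorem~\ref{th:harmonic-bounds} by asserting that the proof of Theorem~\ref{th:harmonic} goes through unchanged, and you have correctly identified which steps use only the upper bound (finiteness, continuity via the dominating function $3\,\one_S(w)\Abs{z_0''-w}^{-1}$, boundedness in the half-space) and which use the lower bound (the final contradiction via $f\ge\pi m>0$ against $f\to 0$ at infinity). No gaps.
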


\bibliographystyle{abbrv}
\bibliography{steinhaus}

\end{document}